\newtheorem{lem}{Lemma}[section]
\newtheorem{cor}[lem]{Corollary}
\newtheorem{Fact}[lem]{Fact}
\newtheorem{prop}[lem]{Proposition}
\newtheorem{thm}[lem]{Theorem}
\newtheorem{Defn}[lem]{Definition}
\newtheorem{Notn}[lem]{Notation}
\newtheorem{Ex}[lem]{Example}
\newtheorem{Question}[lem]{Question}
\newtheorem{Property}[lem]{Property}
\newtheorem{Properties}[lem]{Properties}
\newtheorem{Discussion}[lem]{Remark}
\newtheorem{Construction}[lem]{Construction}
\newtheorem{Subprops}{}[lem]
\newtheorem{Para}[lem]{}
\newenvironment{ex}{\begin{Ex}\rm}{\end{Ex}}
\newenvironment{para}{\begin{Para}\rm}{\end{Para}}
\newenvironment{disc}{\begin{Discussion}\rm}{\end{Discussion}}
\newcommand{\ideal}[1]{\mathfrak{#1}}
\newcommand{\m}{\ideal{m}}
\newcommand{\p}{\ideal{p}}
\newcommand{\q}{\ideal{q}}
\newcommand{\pd}{\operatorname{pd}}
\newcommand{\depth}{\operatorname{depth}}
\newcommand{\amp}{\operatorname{amp}}
\newcommand{\Hom}{\operatorname{Hom}}
\newcommand{\cx}{\operatorname{cx}}
\renewcommand{\geq}{\geqslant}
\renewcommand{\leq}{\leqslant}
\newcommand{\G}{\mathcal{G}}
\newcommand{\Ext}{\mbox{Ext}\,}
\newcommand{\Tor}{\mbox{Tor}\,}
\newcommand{\Supp}{\mbox{Supp}\,}
\newcommand{\Spec}{\mbox{Spec}\,}
\newcommand{\gr}{\operatorname{grade}}
\renewcommand{\dim}{\operatorname{dim}}
\newcommand{\gd}{\mbox{G-dim}\,}
\newcommand{\Ci}{\operatorname{CI\text{-}dim}}
\newcommand{\qpd}{\operatorname{qpd}}
\newcommand{\Rfd}{\operatorname{Rfd}}
\newcommand{\uhom}{{\mathbf R}\Hom}
\newcommand{\utp}{\otimes^{\mathbf L}}
\renewcommand{\H}{\mbox{H}}
\begin{document}

\bibliographystyle{amsplain}

\author{Tirdad Sharif}
\address{School of Mathematics, Institute for Research in Fundamental Sciences (IPM), P.O. Box: 19395-5746, Tehran
Iran.}\email{sharif@ipm.ir}
\thanks{The author was supported by a grant from IPM, (No. 83130311).}

\title[Quasi Projective dimension]{Quasi Projective dimension for complexes}

\subjclass[2000]{13C15,13D05,13D09,13D25}

\keywords{Complete intersection, Depth formula, Homological
dimensions, Intersection Theorem.}

\begin{abstract}
\indent In this note, we extend the quasi-projective
dimension of finite (that is, finitely generated)
modules to homologically finite complexes, and we investigate
some of homological properties of this dimension.
\end{abstract}

\maketitle

\section{Introduction}
Throughout, all rings are commutative and Noetherian.
In~\cite{Av} Avramov, and in \cite{AGP}
Avramov, Gasharov and Peeva defined and studied the complexity, the complete intersection dimension,
and the quasi projective dimension of finite modules.

Let ${\mathcal D}_{b}^{f}(R)$ be the category of \emph{homologically finite} $R$-complexes, and $X\in{\mathcal D}_{b}^{f}(R)$.
The complexity and the complete intersection dimension of $X$,
denoted by $\cx_R X$ and $\Ci_R X$, respectively,
were defined and studied by Sather-Wagstaff \cite{Sa}.
In this work, we introduce the quasi-projective dimension, as a refinement of the projective
dimension, for homologically finite complexes
and verify some of it's homological properties analogous to those holding for modules.

Let $Y, Z\in{\mathcal D}_{b}^{f}(R).$ In our main result, Theorem~\ref{T3}, as an application of
the Intersection Theorem for the quasi-projective
dimension, Proposition~\ref{T1}(b), and the depth formula for
the complete intersection dimension, Proposition~\ref{T2}, a lower bound and an upper bound for
$-\sup\uhom_R(Z,(X\utp_R Y))$ with respect to
$\Ci_R X,$ $\cx_R(X)$ and $-\sup\uhom_R(Z,Y)$ is determined, when $\sup(X\utp_R Y)<\infty$ and $\Ci_R X<\infty.$
This result is as an extension of
a grade inequality in \cite[(3.3)]{SY} to complexes; see Example~\ref{ex1} about this inequality.

\section{Homology theory of complexes}
\label{background}
In this paper, definitions and results are formulated within the framework of the derived
category of complexes. The reader is referred to \cite{AF, Chr1, F, F1} for details of the following brief
summary. Let $X$ be a complex of $R$-modules and $R$-homomorphisms. For an integer $n$,
the $n$-th \emph{shift} or \emph{suspension} of $X$ is the complex $\Sigma^{n}X$ with
$(\Sigma^{n}X)_{\ell}=X_{\ell-n}$ and $\partial_{\ell}^{\Sigma^{n}X}=(-1)^{n}\partial_{\ell-n}^{X}$ for
each $\ell$. The $n$-th \emph{cokernel} of $X$ is
$C_n^{X}$=\emph{cokernel} $\partial^X_{n+1}.$ The \emph{supremum} and the \emph{infimum} of a complex $X$, denoted by $\sup(X)$ and $\inf(X)$, respectively, are defined by the supremum and the infimum of $\{i|\H_i(X)\neq0\}$ and let $\amp(X)=\sup(X)-\inf(X).$
The symbol ${\mathcal D}(R)$ denotes the derived category of
$R$-complexes. The full subcategories
${\mathcal D}_{-} (R)$, ${\mathcal D}_{+} (R)$, ${\mathcal D}_{b}
(R)$ and ${\mathcal D}_{0} (R)$ of ${\mathcal D}(R)$ consist of
$R$-complexes $X$ while $\H_{\ell}(X)=0$, for respectively
$\ell\gg 0$, $\ell\ll 0$, $|\ell|\gg 0$ and $\ell\neq 0$. By
${\mathcal D}^f$ we denote the full subcategory of
complexes with all homology modules are finite, called \emph{homologically degreewise finite} complexes. A
complex $X$ is called \emph{homologically finite}, if it is homologically both bounded and degreewise finite.
The right derived functor of the homomorphism functor of $R$-complexes
and the left derived functor of the tensor product of $R$-complexes are denoted by
$\uhom_R(-,-)$ and $-\utp_{R}-$, respectively. A homology isomorphism is a morphism $\alpha:X\to Y$ such that
$\H(\alpha)$ is an isomorphism; homology isomorphisms are marked by
the sign $\simeq$, while $\cong$ is used for isomorphisms. The
equivalence relation generated by the homology isomorphisms is also
denoted by $\simeq$.

\begin{para}\label{para1}
Let $X$ and $Y$ be in ${\mathcal D}_{+}(R)$, then there is an inequality
$$\inf(X\utp_{R}Y)\geq\inf X+\inf Y.$$

\noindent Equality holds if $i=\inf X$ and $j=\inf Y$ are finite and
${\H_{i}(X)\otimes_{R}\H_{j}(Y)\neq0}.$
\end{para}

\begin{para}
The \emph{support} of a complex $X$,
$\Supp(X)$, consists of all $\p\in\Spec(R)$ such that the
$R_{\p}$-complex $X_{\p}$ is not homologically trivial.
\end{para}

\begin{para}\label{para3}
Let $R$ be a ring. If $(R,\m,k)$ is local, \emph{depth} of a complex
$X\in{\mathcal D}_{-}(R)$ is defined as the following
$$\depth_R X=-\sup\uhom_R(k,X).$$
\end{para}

\noindent (a) Let $X\in{\mathcal D}_{b}^{f}(R)$, then the following inequality holds
$$\depth_R X\leq\depth_{R_{\p}}X_{\p}+\dim R/{\p}.$$

\noindent The \emph{dimension} of a complex $X\in{\mathcal
D}_{+}(R)$ is defined by the following formula
$$\dim_R X=\sup\{\dim
R/{\p}-\inf X_{\p}|\p\in\Supp_R X\}.$$

\vspace{0.05in}\noindent (b) Let $X\in{\mathcal D}_{+}^{f}(R)$, then the
following inequality holds
$$\dim_{R_{\p}}X_{\p}+\dim R/{\p}\leq\dim_RX.$$

\vspace{0.05in}\noindent (c) Let $Y\in{\mathcal D}_{+}^{f}(R)$ and $X\in{\mathcal
D}_{-}(R)$, then the next equality holds
$$-\sup\uhom_R(Y,X)=\inf\{\depth_{R_{\p}}X_{\p}+\inf Y_{\p}|\p\in{\Supp X}\cap{\Supp Y}\}.$$

\begin{para}
For a ring $R$, let $(-)^*=\Hom _R(-, R)$. A finite $R$-module $M$ is \emph{totally reflexive}
over $R$ if $M$ is reflexive and $\Ext^i_R(M, R)=0=\Ext^i_R(M^*,R)$ for all $i>0$. Let $X\in{\mathcal D}_{b}^{f}(R)$.
A \emph{G-resolution} of $X$ is a complex $\G\simeq X$, such that each $\G_i$ is totally reflexive over $R$.
The Gorenstein dimension of $X$ is
$$\gd_R(X)=\inf\{\sup\{i|G_i\neq 0\}|\text{G is a G-resolution of X}\}.$$
\end{para}

\section{\bf Quasi-projective dimension for complexes.}

\noindent In this section, all rings are local. We introduce the quasi-projective dimension, as a refinement of the projective
dimension for homologically finite complexes.

\noindent At first, we bring some notions and
definitions.

\begin{para}
Let $R$ be a ring, a codimension c, quasi-deformation of $R$ is a diagram of local
homomorphisms $R\rightarrow R'\leftarrow Q$ such that the first map is flat and the second map is
surjective with kernel generated by a $Q$-sequence of length c.
The complete intersection dimension and the complexity of a complex $X\in{\mathcal D}_{b}^{f}(R)$ are defined
analogous to those of modules as the following
$$\Ci_R X=\inf\{\pd_Q X'-\pd_Q R'|
R\rightarrow R'\leftarrow Q  \text{ is a quasi-deformation}
\},$$
\noindent where $X'=X\otimes_R R'$, and
$$\cx_R(X)=\inf\{d\in\mathbb{N}_{0}|
\beta_{n}(X)\leq\gamma n^{d-1} \text{ for some }
\gamma\in\mathbb{R}\}$$

\vspace{0.05in}\noindent in which, $\beta_{n}(X)=\dim_{k}\H_n(X\utp_R k)$
is the n-th \emph{Betti number} of $X$.

\vspace{0.05in}\noindent Now we define the quasi-projective dimension of $X\in{\mathcal D}_{b}^{f}(R)$
similar to that of modules as the following
$$\qpd_R X=\inf\{\pd_Q X'| R\rightarrow R'\leftarrow Q \text{ is
a quasi-deformation} \}.$$
\end{para}

\vspace{0.05in}\noindent In the first part of the following proposition, we extend~\cite[(5.11)]{AGP} to complexes.
In the second part, we show that in the Intersection Theorem, ~\cite[(18.5)]{F}, we can replace the projective
dimension with that of the quasi-projective dimension. Note that by~\cite[(16.22)]{F} it is an extension of
\cite[(3.1)]{SY} to complexes.

\begin{prop}\label{T1}
Let $X\in{\mathcal D}_{b}^{f}(R)$ with $\Ci_R X<\infty$
and $Y\in{\mathcal D}_{+}^{f}(R)$. Then
\begin{itemize}
\item[(a)]
$\qpd_R X=\Ci_R X+\cx_R(X)$.
\vspace{0.05in}
\item[(b)]
$\dim_R Y\leq\dim_R(X\utp_R Y)+\qpd_R X.$
\end{itemize}
\end{prop}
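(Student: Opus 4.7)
\textbf{Proof plan for Proposition~\ref{T1}.}

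\emph{Part (a).} I aim to show both inequalities, adapting the argument of \cite[(5.11)]{AGP} to complexes. For $\qpd_R X \geq \Ci_R X + \cx_R X$, let $R \to R' \leftarrow Q$ be any quasi-deformation of codimension $c = \pd_Q R'$ with $\pd_Q X'$ finite. First, the flat base change $R \to R'$ preserves Betti numbers, so $\cx_R X = \cx_{R'} X'$. Next, because $R' = Q/(\underline{f})$ with $\underline{f}$ a $Q$-regular sequence of length $c$ and $\pd_Q X' < \infty$, the Betti numbers of $X'$ over $R'$ grow as a polynomial in $n$ of degree at most $c - 1$ (the complete-intersection growth extending from modules to complexes). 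Hence $\cx_R X = \cx_{R'} X' \leq c$. Since $\pd_Q X' - c \geq \Ci_R X$ by definition, adding gives $\pd_Q X' \geq \cx_R X + \Ci_R X$, and infimizing over quasi-deformations proves the inequality. For the reverse inequality, I invoke the complex analogue of \cite[(5.11)]{AGP}: there exists a quasi-deformation realizing $c = \cx_R X$ and $\pd_Q X' - c = \Ci_R X$ simultaneously, which bounds $\qpd_R X$ from above by $\Ci_R X + \cx_R X$.

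\emph{Part (b).} The plan is to pass to a quasi-deformation and apply the classical Intersection Theorem over the deformation $Q$. Fix a quasi-deformation $R \to R' \leftarrow Q$ of codimension $c$ with $\pd_Q X'$ close to $\qpd_R X$. Flat base change along $R \to R'$ yields
\begin{align*}
\dim_{R'} Y' &= \dim_R Y + \dim(R'/\m R'),\\
\dim_{R'}\bigl((X \utp_R Y) \otimes_R R'\bigr) &= \dim_R(X \utp_R Y) + \dim(R'/\m R').
\end{align*}
Viewing $Y'$ as a $Q$-complex via $Q \onto R'$, every prime in $\Supp_Q Y'$ contains $\ker(Q \onto R')$, so $\dim_Q Y' = \dim_{R'} Y'$. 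Applying the Intersection Theorem~\cite[(18.5)]{F} over $Q$ to $X'$ and $Y'$ gives
$$\dim_Q Y' \leq \dim_Q(X' \utp_Q Y') + \pd_Q X'.$$

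The crux is to identify $\dim_Q(X' \utp_Q Y')$ with $\dim_{R'}(X' \utp_{R'} Y')$. Using the Koszul resolution of $R'$ over $Q$ and that $\underline{f}$ acts trivially on $Y'$, the Koszul differentials vanish after tensoring, yielding in $\D(R')$
$$R' \utp_Q Y' \simeq \bigoplus_{i=0}^{c} \Sigma^i (Y')^{\binom{c}{i}}, \quad \text{hence} \quad X' \utp_Q Y' \simeq \bigoplus_{i=0}^{c} \Sigma^i (X' \utp_{R'} Y')^{\binom{c}{i}}.$$
At each localization the infimum is attained on the $i = 0$ summand, so $\dim_Q(X' \utp_Q Y') = \dim_{R'}(X' \utp_{R'} Y') = \dim_R(X \utp_R Y) + \dim(R'/\m R')$. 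Substituting into the Intersection Theorem estimate, the $\dim(R'/\m R')$ terms cancel, giving $\dim_R Y \leq \dim_R(X \utp_R Y) + \pd_Q X'$; taking the infimum over quasi-deformations completes the proof.

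\emph{Main obstacle.} The principal technical point is the Koszul decomposition above and the accompanying verification that $\dim_Q(X' \utp_Q Y') = \dim_{R'}(X' \utp_{R'} Y')$, which requires tracking supports and infima under localization. A secondary issue in (a) is ensuring the polynomial complexity bound for complexes (not only modules) over a complete intersection, which one checks by a standard change-of-rings spectral sequence. Once these are in place, the remainder of the argument is bookkeeping via flat base change and the classical Intersection Theorem.
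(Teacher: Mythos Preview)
Your argument for part~(a) has a genuine gap in the direction $\qpd_R X \leq \Ci_R X + \cx_R X$. You write that you ``invoke the complex analogue of \cite[(5.11)]{AGP}: there exists a quasi-deformation realizing $c=\cx_R X$ and $\pd_Q X'-c=\Ci_R X$ simultaneously.'' But \cite[(5.11)]{AGP} is precisely the module version of the identity you are proving, so its complex analogue \emph{is} Proposition~\ref{T1}(a); citing it is circular. What you actually need is a complex version of \cite[(5.10)]{AGP} (existence of a quasi-deformation of codimension $\cx_R X$ with $\pd_Q X'<\infty$) together with a separate argument that this particular quasi-deformation also computes $\Ci_R X$. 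Neither step is automatic for complexes. The paper supplies both by passing through a syzygy module: take a projective resolution $P$ of $X$, fix $n\geq\sup X$, and work with the module $C_n^P$. The module-level result \cite[(5.10)]{AGP} applied to $C_n^P$ produces the quasi-deformation, a truncation exact sequence transfers finiteness of $\pd_Q$ from $C_n^{P'}$ to $P'$, and the remaining comparison $\pd_Q P'-\Ci_R P\leq\cx_R P$ is carried out via Gorenstein dimension (using $\Ci_R P=\gd_R P$ from \cite[(3.3)]{Sa} and the change-of-rings formulas \cite[(2.3.10),(2.3.12)]{Chr1}, \cite[(5.11)]{Chr2}). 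The same truncation device, combined with \cite[(5.9)]{AGP}, also handles the polynomial-growth bound you flagged as a secondary issue in the other inequality, so you need not set up a spectral sequence for complexes from scratch.

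For part~(b) your strategy is sound and parallels the paper's, but you are working harder than necessary at the key identification. The paper observes that for \emph{any} surjection $A\to B$ and $X,Z\in\mathcal D_b^f(B)$ one has $\dim_A(X\utp_A Z)=\dim_B(X\utp_B Z)$: the supports coincide, and at each prime the infimum is $\inf X_\p+\inf Z_\p$ by~\ref{para1} and Nakayama, independently of whether one derives over $A$ or $B$. Your Koszul decomposition is correct in the deformation case and reaches the same conclusion, but the simpler remark avoids tracking summands and shifts. The paper also first proves the inequality for $Z\in\mathcal D_b^f(R)$ and then passes to $Y\in\mathcal D_+^f(R)$ by applying it to each $\H_m(Y)$ and invoking \cite[(16.24.b)]{F}; you apply \cite[(18.5)]{F} directly to $Y'$, which is fine provided that version of the Intersection Theorem is stated for $\mathcal D_+^f$ rather than only $\mathcal D_b^f$.
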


\begin{proof}
\vspace{0.05in}

\noindent (a) From~\cite[(3.10)]{Sa} it follows that $\cx_R(X)<\infty.$ Thus $\Ci_R X+\cx_R(X)$ and $\qpd_R X$ are
finite. Let $P$ be a projective resolution of $X.$ We only need to show that
$\qpd_R P=\Ci_R P+\cx_R (P)$. Take $n\geq\sup X$, and let $C_n^ P$ be the $n$-th
cokernel of $P$. By~\cite[(2.12)]{Sa} and ~\cite[(3.7)]{Sa} we get $\cx_R(X)=\cx_R(C_n^P)$ and
$\Ci_R C_n^P<\infty,$ respectively. Hence by~\cite[(5.10)]{AGP} we can choose a
quasi-deformation $R\rightarrow R'\leftarrow Q$ of
codimension equal to $\cx_R(X)$ such that $\pd_Q C_n^{P'}<\infty$,
where $P'=P\otimes_R R'$. Let $P_{\leq n-1}$ and $P_{\geq n}$ be
the hard left and the hard right truncations of $P$, respectively.
The following sequence of complexes is
exact
$$\hspace{1.5in} 0\longrightarrow P_{\leq n-1}\longrightarrow P\longrightarrow
P_{\geq n}\longrightarrow0. \hspace{0.9in}(3.2.1)$$

\noindent Since $R'$ is faithful flat as an $R$-module, the
following sequence of $R'$-complexes is also exact
$$ 0\longrightarrow P'_{\leq n-1}\longrightarrow P'\longrightarrow
P'_{\geq n}\longrightarrow0.$$

\noindent It is clear that $\pd_{R'}P'_{\leq n-1}<\infty$. Since $\pd_Q R'<\infty$,
thus $\pd_Q P'_{\leq n-1}<\infty$. On the other hand, $P'_{\geq n}\simeq\Sigma^n C_n^{P'}$
and $\pd_Q C_n^{P'}<\infty$, thus we have $\pd_Q P'_{\geq n}<\infty$.
Now from the above exact sequence
we find that $\pd_Q P'<\infty$, and this implies that $\qpd_R P\leq\pd_Q
P'$. From~\cite[(3.3)]{Sa} we have the following (in)equalities

\vspace{0.01in}
$$\begin{array}{rl}
\qpd_R P-\Ci_R P &\,
\leq\pd_Q P'-\gd_R P \\
&\,=\gd_Q P'-\gd_R P \\
&\,=\gd_{R'}P'+\cx_R(P)-\gd_R P \\
&\,=\cx_R(P),
\end{array}$$

\noindent where the equalities hold by
~\cite[(2.3.10)]{Chr1}, ~\cite[(2.3.12)]{Chr1}, and ~\cite[(5.11)]{Chr2}, respectively.
On the other hand, there is a quasi-deformation of
codimension $c$, $R\rightarrow R''\leftarrow Q'$ such that
$\qpd_R P=\pd_{Q'} P''$, where $P''=P\otimes_{R}R''.$ Therefore from (3.2.1) we get $\pd_{Q'} C_n^{P''}<\infty$, and from
~\cite[(5.9)]{AGP} it follows that $\cx_R P=\cx_R(C_n^{P})\leq c$.
We have the following equalities

\vspace{0.01in}
$$\begin{array}{rl}
\qpd_R P-\Ci_R P &\,
=\pd_{Q'} P''-\gd_R P \\
&\,=\gd_{Q'} P''-\gd_R P \\
&\,=\gd_{R''}P''+c-\gd_R P \\
&\,=c\geq \cx_R(C_n^P)=\cx_R(P)
\end{array}$$

\noindent in which, we have used~\cite[(2.3.10)]{Chr1}, ~\cite[(2.3.12)]{Chr1}, and ~\cite[(5.11)]{Chr2} again.
Now assertion holds.

\vspace{0.1in}\noindent (b) Let $\rho:A\rightarrow B$ be a surjective homomorphism
of rings and let $X, Z\in{\mathcal D}_{b}^{f}(B)$. Then it is easy
to see that, $\dim_A(X\utp_A Z)=\dim_B(X\utp_B Z)$. Now let
$\qpd_R X=\pd_Q X'$, when $R\rightarrow R'\leftarrow Q$ is a
quasi deformation. From~\cite[(18.5)]{F} we get

$$\hspace{1.5in}\dim_Q Z'\leq\dim_Q(X'\utp_QZ')+\pd_Q X'. \hspace{0.9in} (3.2.2)$$

\noindent We have $\dim_Q Z'=\dim_{R'} Z'$ and
$\dim_Q(X'\utp_Q Z')=\dim_{R'}(X'\utp_{R'} Z')$, because $Q\rightarrow R'$ is surjective.
By the associativity of the derived tensor product
we get $X'\utp_{R'}Z'=(X\utp_R Z)'$. Since $R'$ is a flat $R$-algebra, from~\cite[(2.1)]{AF1}
we have the following equalities
$$\dim_{R'} Z'=\dim_R Z+\dim_R {R'/{\m}{R'}},$$

\noindent and
$$\dim_{R'}(X\utp_R Z)'=\dim_R(X\utp_R Z)+\dim_R {R'/{\m}{R'}}.$$

\noindent By combining (3.2.2) with the above equalities, we get the
following inequality

$$\dim Z\leq\qpd_R X+\dim_R(X\utp_R Z).$$

\noindent Now let $Y\in{\mathcal D}_{+}^{f}(R)$. Since $\H_{m}(Y)$ is a finite $R$-module, from the above inequality
it follows that
$$\hspace{1in}\dim_R \H_{m}(Y)\leq \qpd_R X+\dim_R(X\utp_R \H_m(Y)). \hspace{0.8in} (3.2.3)$$

\vspace{0.05in}\noindent Let $P$ be a projective resolution of $X$. Thus $P\simeq X$ and $P_i=0$, for $i\ll0$.
By applying ~\cite[(16.24.b)]{F} we get
$$\dim_R(P\otimes_R Y)=\sup\{\dim_R(P\otimes_R \H_m(Y))-m|m\in\mathbb{Z}\}.$$

\vspace{0.05in}\noindent Therefore
$$\dim_R(X\utp_R Y)=\sup\{\dim_R(X\utp_R \H_m(Y))-m|m\in\mathbb{Z}\}.$$

\noindent Now from (3.2.3) assertion holds.
\end{proof}

\begin{disc}
Let $\mathcal{F}(R)$ be the category of $R$-modules
of finite flat dimension. The \emph{large restricted flat dimension} of
$X\in{\mathcal D}_{+}(R)$, $\Rfd_R X$, was introduced and studied by Christensen, Foxby and Frankild in~\cite{CFF},
and is defined as the following
$$\Rfd_R X=\sup\{\sup(T\utp_R X)|T\in\mathcal{F}(R)\}.$$

\vspace{0.05in}\noindent It is shown that $\Rfd$ is a refinement of the flat dimension, \cite[(2.5)]{CFF}.
By~\cite[(3.6)]{SY1}, it is easy to see that $\Rfd_R X$ is also a refinement of $\Ci_R X$, for
$X\in{\mathcal D}_{b}^f(R).$ Using these facts, we can prove the first part of the above proposition, without any
using of the Gorenstein dimension. However, by~\cite[(12,13)]{F}, \cite[(2.6)]{I}, and \cite[(3.3)]{Sa},
we can also prove it, without any using of the above dimensions, see the proof of~\cite[(5.11)]{AGP}.
\end{disc}

\noindent The next proposition is the depth formula for the complete intersection
dimension of complexes, immediately follows from~\cite[(3.3)]{SSY} and~\cite[(3.3)]{Sa}.

\begin{prop}\label{T2}
Let $\Ci_R X<\infty$ and $Y\in{\mathcal D}_{b}^{f}(R)$, if
$\lambda=\sup(X\utp_RY)<\infty$, then
$$\depth_R(X\utp_R Y)=\depth_R Y-\Ci_R X.$$
\end{prop}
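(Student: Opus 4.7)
The plan is to combine two previously established formulas and solve algebraically for $\depth_R(X\utp_R Y)$. The asserted identity is equivalent to the classical Auslander depth formula, once one substitutes the Auslander--Bridger--Buchsbaum equality for the complete intersection dimension of a complex.

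First I would invoke \cite[(3.3)]{SSY}, which, under our standing hypotheses $\Ci_R X<\infty$ and $\sup(X\utp_R Y)<\infty$, supplies the Auslander-type depth formula
$$\depth_R X+\depth_R Y=\depth R+\depth_R(X\utp_R Y).$$
Next I would invoke \cite[(3.3)]{Sa}, which, whenever $\Ci_R X<\infty$, supplies the Auslander--Bridger-type equality
$$\Ci_R X=\depth R-\depth_R X.$$
Substituting the second equality into the first and solving for $\depth_R(X\utp_R Y)$ yields
$$\depth_R(X\utp_R Y)=\depth_R X+\depth_R Y-\depth R=\depth_R Y-\Ci_R X,$$
which is exactly the claim.

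The main obstacle, if any, lies only in verifying that the hypotheses of the two cited formulas are in force for our $X$ and $Y$; both are immediate from the assumptions of the proposition, which is why the deduction collapses to a one-line algebraic rearrangement, consistent with the phrase ``immediately follows'' preceding the statement. Should \cite[(3.3)]{SSY} turn out to be stated only for modules rather than complexes, I would insert a standard reduction via a projective resolution $P\simeq X$ and its $n$-th cokernel $M=C_n^P$ for $n\gg\max\{\sup X,\sup(X\utp_R Y)\}$, using \cite[(2.12)]{Sa} and \cite[(3.7)]{Sa} to transfer the complexity and complete-intersection data between $X$ and $M$, applying the module version of the depth formula to the pair $(M,Y)$, and then pulling the conclusion back to $X$ through the triangle induced by the brutal truncation $0\to P_{\leq n-1}\to P\to P_{\geq n}\to 0$.
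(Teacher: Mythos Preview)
Your proposal is correct and mirrors the paper's own justification exactly: the paper states that the result ``immediately follows from~\cite[(3.3)]{SSY} and~\cite[(3.3)]{Sa}'' and gives no further argument, which is precisely the two-citation substitution you carry out. Your contingency reduction via cokernels is unnecessary here since \cite[(3.3)]{SSY} is already stated for complexes, but it does no harm.
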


\vspace{0.05in}\noindent Now we are in the position of proving our main result.

\begin{thm}\label{T3}
Let $X, Y, Z\in{\mathcal D}_{b}^{f}(R)$, if
$\Ci_R X<\infty$ and $\sup(X\utp_R Y)< \infty$, then

\begin{itemize}
\item[(a)]
$-\sup\uhom_R(Z,Y)-\Ci_R X\leq-\sup\uhom_R(Z,(X\utp_RY)).$
\vspace{0.05in}
\item[(b)]

$-\sup\uhom_R(Z,(X\utp_R Y))\leq\amp Y+\cx_R(X)-\sup\uhom_R(Z, Y)-\inf X.$
\end{itemize}
\end{thm}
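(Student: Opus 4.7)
My plan is to base both parts on the formula from~\ref{para3}(c),
$$-\sup\uhom_R(Z,W)=\inf\{\depth_{R_\p}W_\p+\inf Z_\p:\p\in\Supp W\cap\Supp Z\},$$
applied to $W=X\utp_R Y$, combined with a local application of Proposition~\ref{T2}. At each prime $\p\in\Supp X\cap\Supp Y=\Supp(X\utp_R Y)$, the hypotheses descend to the localization: $\Ci_{R_\p}X_\p\leq\Ci_R X<\infty$ by the localization property of complete intersection dimension, and $\sup(X_\p\utp_{R_\p}Y_\p)\leq\sup(X\utp_R Y)<\infty$. Hence Proposition~\ref{T2} yields $\depth_{R_\p}(X\utp_R Y)_\p=\depth_{R_\p}Y_\p-\Ci_{R_\p}X_\p$ at every such~$\p$.

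Part~(a) is then immediate: from $\Ci_{R_\p}X_\p\leq\Ci_R X$ one obtains
$$\depth_{R_\p}(X\utp_R Y)_\p+\inf Z_\p\geq\depth_{R_\p}Y_\p+\inf Z_\p-\Ci_R X$$
for every $\p\in\Supp X\cap\Supp Y\cap\Supp Z$. Taking infima on both sides, and noting the inclusion $\Supp X\cap\Supp Y\cap\Supp Z\subseteq\Supp Y\cap\Supp Z$, the right-hand infimum is bounded below by $-\sup\uhom_R(Z,Y)-\Ci_R X$, which is~(a).

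For~(b) I would rewrite the formula of~\ref{para3}(c) as a maximum,
$$\sup\uhom_R(Z,X\utp_R Y)=\max\{-\depth_{R_\q}Y_\q+\Ci_{R_\q}X_\q-\inf Z_\q:\q\in\Supp X\cap\Supp Y\cap\Supp Z\},$$
and exhibit a single prime $\q$ in this set for which the right-hand expression is at least $\sup\uhom_R(Z,Y)+\inf X-\amp Y-\cx_R X$. Let $\p^*\in\Supp Y\cap\Supp Z$ attain the maximum defining $\sup\uhom_R(Z,Y)$. If $\p^*\in\Supp X$, take $\q=\p^*$: the desired inequality reduces to $\Ci_{R_{\p^*}}X_{\p^*}\geq\inf X-\amp Y-\cx_R X$, which holds because $\Ci_{R_{\p^*}}X_{\p^*}\geq\gd_{R_{\p^*}}X_{\p^*}\geq\sup X_{\p^*}\geq\inf X$ by~\cite[(3.3)]{Sa}, while $\amp Y,\cx_R X\geq 0$ supply extra slack. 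The main obstacle is the complementary case $\p^*\notin\Supp X$. There I would specialize to some $\q\supseteq\p^*$ in $\Supp X\cap\Supp Y\cap\Supp Z$ (for example the maximal ideal $\m$) and apply Proposition~\ref{T1}(b) at $\q$, together with Proposition~\ref{T1}(a), the localization bound $\cx_{R_\q}X_\q\leq\cx_R X$, and the inequality $\dim_{R_\q}(X\utp_R Y)_\q\leq\dim_{R_\q}Y_\q-\inf X$ derived from~\ref{para1}, in order to show that $\Ci_{R_\q}X_\q$ is large enough to absorb the ``prime deficit'' $\depth_{R_\q}Y_\q+\inf Z_\q+\sup\uhom_R(Z,Y)$ up to the slack $\amp Y+\cx_R X$. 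Verifying that the Intersection Theorem for $\qpd$ delivers exactly this amount of slack is the crux of the argument.
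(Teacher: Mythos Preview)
Your argument for part~(a) is correct and is essentially the paper's proof, phrased as an infimum over all primes rather than as a choice of a single witness prime.

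For part~(b) there is a genuine gap. Your Case~1 is fine, but in Case~2 you explicitly stop at ``the crux of the argument'' without carrying it out, and the ingredients you list do not suffice. Applying Proposition~\ref{T1}(b) over $R_\q$ directly to $Y_\q$ and combining it with your bound $\dim_{R_\q}(X\utp_R Y)_\q\leq\dim_{R_\q}Y_\q-\inf X$ collapses to the triviality $\qpd_{R_\q}X_\q\geq\inf X$; this gives no control over the ``prime deficit'' $\depth_{R_\q}Y_\q-\depth_{R_{\p^*}}Y_{\p^*}$, which by~\ref{para3}(a) can be as large as $\dim R_\q/{\p^*}R_\q$. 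Taking $\q=\m$ does not help.

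The missing idea is to tensor with $R/{\p^*}$ \emph{before} invoking the Intersection Theorem. One applies Proposition~\ref{T1}(b) not to $Y$ but to $V=R/{\p^*}\utp_R Y$, and one chooses $\q$ \emph{minimal} in $\Supp(R/{\p^*}\utp_R X\utp_R Y)$. Minimality forces $\dim_{R_\q}(X\utp_R V)_\q=-\inf X_\q-\inf Y_\q$ via~\ref{para1}, while~\ref{para3}(b) gives $\dim_{R_\q}V_\q\geq -\inf Y_{\p^*}+\dim R_\q/{\p^*}R_\q$ and~\ref{para3}(a) gives $\dim R_\q/{\p^*}R_\q\geq\depth_{R_\q}Y_\q-\depth_{R_{\p^*}}Y_{\p^*}$. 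Now the Intersection Theorem genuinely bounds the prime deficit by $\qpd_{R_\q}X_\q$, and substituting Proposition~\ref{T1}(a), Proposition~\ref{T2}, $\cx_{R_\q}(X_\q)\leq\cx_R(X)$ and $\inf Z_\q\leq\inf Z_{\p^*}$ finishes the proof. With this device the case split $\p^*\in\Supp X$ versus $\p^*\notin\Supp X$ becomes unnecessary.
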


\begin{proof}
\vspace{0.05in}

\noindent (a) From~\ref{para3}(c) it follows that there is $\p\in\Supp_R X\cap\Supp_R Y\cap\Supp_R Z$ such that
$$-\sup\uhom_R(Z,(X\utp_R Y))=\depth_{R_{\p}}(X_{\p}\utp_{R_{\p}}Y_{\p})+\inf Z_{\p}.$$

\noindent Thus we have

\vspace{0.01in}
$$\begin{array}{rl}
 -\sup\uhom_R(Z,(X\utp_R Y)) &\,
=\depth_{R_{\p}}Y_{\p}-\Ci_{R_{\p}}X_{\p}+\inf Z_{\p} \\
&\,\geq\depth_{R_{\p}}Y_{\p}-\Ci_R X+\inf Z_{\p} \\
&\,\geq -\sup\uhom_R(Z,Y)-\Ci_R X
\end{array}$$

\noindent in which, the equality holds by Proposition~\ref{T2} and the inequalities hold by \cite[(3.4)]{Sa} and~\ref{para3}(c),
respectively

\vspace{0.1in}\noindent (b) By~\ref{para3}(c) for some $\p\in\Supp_R Z\cap\Supp Y$, we get
$-\sup\uhom_R(Z,Y)=\depth_{R_{\p}}Y_{\p}+\inf Z_{\p}$. Now let $W=(R/{\p}\utp_R X\utp_R Y)$. Since
$X\utp_{R} Y\in{\mathcal D}_{b}^{f}(R)$, thus we can choose a
prime ideal $\q$ of $R$ which is minimal in $\Supp W$. Therefore
$\p\subseteq\q$ and $\q\in\Supp X\cap\Supp Y$. On the one hand,
since $\p\in\Supp Z$, thus $\q\in\Supp Z$. Let
$V=(R/{\p}\utp_R Y)$. Since $V_{\q}\in{\mathcal D}_{+}^{f}(R_{\q})$ and $X_{\q}\in{\mathcal D}_{b}^{f}(R_{\q})$,
from Proposition~\ref{T1}(b) it follows that
$$\dim_{R_{\q}}V_{\q}\leq\qpd_{R_{\q}}X_{\q}+\dim_{R_{\q}}W_{\q}.$$

\noindent Using~\ref{para1} and the definition of dimension of complexes, it is straightforward to verify that
$\dim_{R_{\p}}V_{\p}=-\inf Y_{\p}$ and
$\dim_{R_{\q}}W_{\q}=-\inf X_{\q}-\inf Y_{\q}$.

By~\ref{para3}(a) and ~\ref{para3}(b), we have the following inequalities, respectively.

$$\dim R_{\q}/{\p}R_{\q}\geq\depth_{R_{\q}}Y_{\q}-\depth_{R_\p}Y_{\p}$$

\noindent and
$$\dim_{R_{\q}}V_{\q}\geq\dim_{R_\p}V_{\p}+\dim
R_{\q}/{\p}R_{\q}.$$

\noindent Therefore the following inequality holds

\[-\inf Y_{\p}+\depth_{R_{\q}}Y_{\q}-\depth_{R_\p}Y_{\p}\leq
\qpd_{R_{\q}}X_{\q}-\inf X_{\q}-\inf Y_{\q}.
\]
Proposition~\ref{T2} yields an equality
\[ \depth_{R_{\q}}Y_{\q}=\depth_{R_{\q}}(X_{\q}
\utp_{R_{\q}}Y_{\q})+\Ci_{R_{\q}}X_{\q}.\]

\noindent Now from Proposition~\ref{T1}(a) we get the following inequality
\[\depth_{R_\q}(X_\q\utp_{R_\q}Y_\q)
+\inf X_\q+\inf Y_\q-\inf Y_\p+\inf Z_\p\leq \cx_{R_\q}(X_\q)+\depth_{R_\p}Y_\p+\inf Z_\p. \]
\noindent It is easy to see that $\cx_{R_\q}(X_\q)\leq \cx_R(X)$,
and $\inf Z_\q\leq\inf Z_\p$. Thus from~\ref{para3}(c) we have
\[-\sup\uhom_{R}(Z, X\utp_{R} Y)+\inf X_{\q}+\inf Y_{\q}-\inf Y_\p\leq
\cx_R(X)-\sup\uhom_{R}(Z,Y) .\]

\noindent Therefore
\[-\sup\uhom_R(Z,X\utp_R Y)\leq\amp Y+\cx_R(X)-\sup\uhom_R(Z,Y)-\inf X.\]
Now assertion holds.
\end{proof}

\noindent Let $M$ and $N$ be $R$-modules. Then
$$\gr_R(M,N)=\inf\{i|\Ext_{R}^i(M,N)\neq 0\}.$$
\vspace{0.05in}\noindent If $\Ext_{R}^i(M,N)=0$, for all $i$, then $\gr_R(M,N)=\infty.$ We say that $M$ and $N$
are Tor-independent, if $\Tor_i^{R}(M,N)=0$, for $i>0$.

\vspace{0.05in}\noindent The following grade inequality, see \cite[(3.3)]{SY}, is an immediate corollary of the above theorem.

\begin{cor}\label{T4}
Let $M$, $N$ and $L$ be finite $R$-modules with $\Ci_R N<\infty$ such that $M$ and $N$ are
$\Tor$-independent $R$-modules. Then
$$\gr(L,M)-\Ci_R N\leq\gr(L,M\otimes_R N)\leq\gr(L,M)+\cx_R(N))$$
\end{cor}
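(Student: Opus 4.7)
The plan is to obtain the corollary as a direct specialization of Theorem~\ref{T3} to modules, after checking that the hypotheses transfer and that the complex-level invariants collapse to the familiar module-level ones. I would apply the theorem with $X=N$, $Y=M$, and $Z=L$.

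First I would verify the hypotheses. The assumption $\Ci_R N<\infty$ is immediate. Tor-independence of $M$ and $N$ gives $N\utp_R M\simeq N\otimes_R M$, a complex concentrated in degree $0$, so $\sup(N\utp_R M)=0<\infty$ (or $-\infty$ if $M\otimes_R N=0$, in which case the right-hand grade is $\infty$ and both inequalities are trivial). Next I would record the translation between complex invariants and module invariants: for finite modules $A,B$ one has $\gr_R(A,B)=-\sup\uhom_R(A,B)$, since $\HH_{-i}\uhom_R(A,B)\cong\Ext^i_R(A,B)$, and for the module $M$ (resp.\ $N$) viewed as a complex, $\sup M=\inf M=0$, $\amp M=0$, and $\inf N=0$.

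With these identifications, Theorem~\ref{T3}(a) applied to $X=N$, $Y=M$, $Z=L$ reads
\[
-\sup\uhom_R(L,M)-\Ci_R N\;\leq\;-\sup\uhom_R(L,N\utp_R M),
\]
which becomes $\gr(L,M)-\Ci_R N\leq \gr(L,M\otimes_R N)$, the left-hand inequality. Similarly, Theorem~\ref{T3}(b) yields
\[
-\sup\uhom_R(L,N\utp_R M)\;\leq\;\amp M+\cx_R(N)-\sup\uhom_R(L,M)-\inf N,
\]
and substituting $\amp M=0$ and $\inf N=0$ gives the right-hand inequality $\gr(L,M\otimes_R N)\leq \gr(L,M)+\cx_R(N)$.

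There is no serious obstacle here; the proof is essentially a dictionary translation. The only point that warrants a line of explanation is the replacement of $N\utp_R M$ by $N\otimes_R M$, which is precisely where the Tor-independence hypothesis is used, and the trivial case $M\otimes_R N=0$ should be mentioned briefly so that both sides are interpreted consistently in $\mathbb{Z}\cup\{\infty\}$.
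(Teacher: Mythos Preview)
Your proposal is correct and is exactly the specialization the paper intends: it states that the corollary ``is an immediate corollary of the above theorem'' (Theorem~\ref{T3}) with no further proof given, and your choice $X=N$, $Y=M$, $Z=L$ together with the identifications $\gr_R(A,B)=-\sup\uhom_R(A,B)$, $\amp M=0$, $\inf N=0$, and $N\utp_R M\simeq N\otimes_R M$ from Tor-independence is precisely what is required.
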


\noindent In Example~\ref{ex1}, it is shown that $\gr(L,M\otimes_R N)$ may be arbitrary
greater than $\gr(L,M)-\Ci_R N$. This example also shows that in
the above inequality, the term $\cx_R(N)$ is strongly necessary.

\begin{para}\label{para4}
Let $k$ be a field and let $R_t$ for $t=1,2$
be $k$-algebras, and suppose that $M_t$ is an $R_t$-module. Let $R=R_1\otimes_k R_2$, $N=M_1\otimes_k R_2$,
and $M=M_2\otimes_k R_1$. It is easy to see that $\Tor_i^{R}(M,N)=0$, for $i>0$, see \cite[(4.2)]{J}, and
$N\simeq M_1\otimes_{R_1} R$, $M\simeq M_2\otimes_{R_2} R.$
\end{para}

\begin{ex}\label{ex1}
We use the above notations. Take an arbitrary integer $n\geq 1.$
Let $Q_1=k[[X_j,Y_j]], B_1=(X_j Y_j)$, for $1\leq j \leq n$ and $Q_2=k[[X_j,Y_j]], B_2=(X_j
Y_j)$, for $n< j \leq 2n$. It is clear that the local rings $R_t=Q_t/B_t$, for
$t=1,2$, are non-regular complete intersections of
codimension $n$. Let $M_1=R_1/(Y_i)R_1$
and $M_2=R_2/(Y_j)R_2$, for $1\leq i \leq n$ and $n< j \leq 2n$,
respectively. We have $\depth R_t=\depth_{R_t} {M_t}=n$, for $t=1,2$, and
the Auslander-Buchsbaum formula for the complete intersection
dimension yields $\Ci_{R_t}{M_t}=0$, for $t=1,2$.

\noindent Let $L_1=R_1/(X_i){R_1}$, for $1\leq i \leq n$. Theorem~\ref{T1}(b) and ~\cite[(16.22)]{F} yield
$\dim_{R_1} L_1\leq\cx_{R_1}(M_1)+\dim_{R_1}(L_1\otimes_{R_1} M_1).$
Since $L_1\otimes_{R_1} M_1\simeq k$ and $\dim_{R_1}
L_1=n$, thus $\cx_{R_1}(M_1)\geq n$, and from \cite[(8.1.2)]{Av1}
we have $n=\cx_{R_1}(k)\geq \cx_{R_1}(M_1)$, thus
$\cx_{R_1}(M_1)=n$. It is easy to see that
$R=k[[X_{\ell},Y_{\ell}]]/(X_{\ell}
Y_{\ell})$ and $M\otimes_R N=R/(Y_{\ell})R$, for
$1\leq\ell\leq 2n.$ By~\ref{para4}, $M$ and $N$ are $\Tor$-independent $R$-modules.
Now let $L=R/(X_{\ell})R$, for $1\leq\ell\leq 2n.$ Then we get $\gr_R(L,M)=n$, and
$\gr_R(L,M\otimes_R N)=2n.$ The natural local homomorphism $\varphi_t: R_t\rightarrow R$
is flat. Using~\ref{para4}, since $R$ is complete intersection, from \cite[(1.13.1)]{AGP} and
\cite[(5.2.3)]{AGP} we get $\Ci_R N=0$
and $\cx_R(N)=n,$ respectively. Therefor the left side of the inequality of Corollary~\ref{T4}
is equal to $n$, and it's right side is equal to $2n$.
\end{ex}

\end{document}